\documentclass[12pt,reqno]{article}
\usepackage{amssymb,amscd,amsmath,amsthm,color}
\newtheorem{theorem}{Theorem}[section]

\newtheorem{cor}[theorem]{Corollary}
\newtheorem{prop}[theorem]{Proposition}

\usepackage{enumerate,amssymb}
\theoremstyle{definition}

\newtheorem{question}[theorem]{Question}
\theoremstyle{remark}
\newtheorem{remark}[theorem]{Remark}

\numberwithin{equation}{section}

\DeclareMathOperator{\cotan}{cotan}
\DeclareMathOperator{\arccotan}{arccot}

\def\dim{\mathrm{span\,}}
\def\dim{\mathrm{dim\,}}
\def\eps{\varepsilon}
\def\rk{\mathrm{rank\,}}
\def\eps{\varepsilon}

\def\bC{\mathbb{C}}

\def\bM{\mathbb{M}}
\def\bN{\mathbb{N}}
\def\bR{\mathbb{R}}

\def\bZ{\mathbb{Z}}

\def\trE{\mathcal{E}}
\def\trF{\mathcal{F}}

\def\trH{\mathcal{H}}

\def\trP{\mathcal{P}}
\def\trQ{\mathcal{Q}}
\def\trR{\mathcal{R}}
\def\trS{\mathcal{S}}

\begin{document}
\baselineskip=15pt

\title{Involutions and   angles between subspaces }

\author{Jean-Christophe Bourin and  Eun-Young Lee  }

\date{ }

\maketitle

\vskip 10pt\noindent
{\small 
{\bf Abstract.}  If $S$ is an involution matrix, which means $S^2=I$, then
$S$ is unitarily equivalent to 
$$(\eps I_k)\oplus\left\{\bigoplus_{i=1}^{m}
\begin{pmatrix}0&x_i^{-1}\\ x_i&0\end{pmatrix}\right\}$$
where $\eps=\pm 1$, $I_k$ is the identity of size $k=|{\mathrm{Tr\,}}S|$, and $0<x_i\le 1$, $i=1,\ldots,m=n-k$. This can be used to introduce principal angles between subspaces and yields   formulas for the  angles  between the eigenspaces of $S$ such as $\sin \alpha_i=2/(x_i+x_i^{-1})$. 

\vskip 5pt\noindent
{\it Keywords.}  Involution,  projection,   principal angles, numerical range.
\vskip 5pt\noindent
{\it 2020 mathematics subject classification.} 15A60, 15A21.
}

\section*{}

In Section 2, we will show our main results related to the structure of involutory matrices, or involutions. This refines some recent observations by Ikramov and this yields  simple formulas for the principal angles between the two subspaces associated to an involution. 
Section 3 studies dilations into involutions.

From our structural result for involutions of Section 2, one might easily derive the classical notion of principal angle. 
However, for convenience of the reader and completeness,  we recall  in  Section 1 some facts on principal angles, with detailed proofs. 

\section{Principal angles}

Let $\bM_n$ denote either the space of $n\times n$ complex or real matrices. In the case of real matrices, a unitary matrix is   an orthogonal matrix. We freely identify an operator $A$ on $\trH=\bC^n$ or $\bR^n$ with its matrix $A$ in the canonical basis. Given $A\in\bM_n$ with rank $r$, the polar decomposition says that there exist two orthonormal basis $\{x_i\}$ and  $\{y_i\}$ of $\trH$  such that, using Dirac notation, 
$$
A=\sum_{i=1}^n a_i |y_i\rangle\langle x_i|
$$
for some scalars $a_1\ge \cdots \ge a_n$, the  singular values of $A$. If $A$ is the product of two orthoprojections (i.e., orthogonal projections)  this leads to the notion of principal angles.

\newpage

\vskip 5pt
\begin{prop}\label{prop1} Let $P,Q\in\bM_n$ be two nonzero orthoprojections,  let $\mu_1\ge \cdots\ge \mu_n$ be the singular values of $PQ$, and let $r=\min\{\rk P,\rk Q\}$. Then there exists an orthonormal system $\{q_i\}_{i=1}^r$ in the range $\trQ$ of $Q$, and an orthonormal system $\{p_i\}_{i=1}^r$ in the range $\trP$ of $P$, such that
$$
PQ=\sum_{i=1}^r \mu_i |p_i\rangle\langle q_i|.
$$
Furthermore we necessarily have: 
 \begin{itemize}
 \item[(a)] $i\neq j \Rightarrow \langle p_i, q_j\rangle =0$;
 \item[(b)]  $\langle p_i, q_i\rangle =\mu_i$;
  \item[(c)] $\trH_c:=\trQ\ominus{\mathrm{span}}\{ q_i, 1\le i\le r\}$ is orthogonal to $\trP$. \end{itemize}
The principal angles $0\le \alpha_i^{\uparrow}\le \cdots\le \alpha_r^{\uparrow}\le \pi/2$ between $\trP$ and $\trQ$ (or between $\trQ$ and $\trP$) are defined by 
$$\alpha_i^{\uparrow}=\arccos \langle p_i,q_i\rangle, \qquad i=1,\ldots, r.$$
\end{prop}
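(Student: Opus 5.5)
We start from the singular value decomposition of $A=PQ$ recalled just before the statement,
$$
PQ=\sum_{i=1}^n \mu_i |y_i\rangle\langle x_i|,
$$
where $\{x_i\}$ and $\{y_i\}$ are orthonormal bases of $\trH$. The plan is to show that the terms with $i\le r$ already have $x_i\in\trQ$ and $y_i\in\trP$, and that the terms with $i>r$ carry zero singular values.

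\emph{Rank and where the vectors live.} Since $\rk PQ\le r$, we have $\mu_i=0$ for $i>r$, so the sum stops at $i=r$. Assume first that $\mu_i>0$. Then $x_i$ is an eigenvector of $(PQ)^*PQ=QPQ$ with eigenvalue $\mu_i^2\neq0$. This gives $x_i\in\mathrm{ran}\,Q=\trQ$. In the same way, $y_i$ is an eigenvector of $PQ(PQ)^*=PQP$ with eigenvalue $\mu_i^2\neq0$, so $y_i\in\trP$.

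For indices $i\le r$ with $\mu_i=0$, the vectors $x_i,y_i$ are free, so we replace them. We have $\ker(QPQ)\cap\trQ=\trQ\cap\ker P$ and $\ker(PQP)\cap\trP=\trP\cap\ker Q$. Let $s$ be the number of nonzero $\mu_i$. Since $\rk PQ=s$ and $\dim\trQ\ge r$, we get
$$
\dim(\trQ\cap\ker P)=\dim\trQ-s\ge r-s .
$$
Likewise, $\dim(\trP\cap\ker Q)\ge r-s$. Choose orthonormal vectors $q_{s+1},\dots,q_r$ in $\trQ\cap\ker P$ and $p_{s+1},\dots,p_r$ in $\trP\cap\ker Q$. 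These are orthogonal to the earlier $x_i$ (respectively $y_i$), because they lie in $\ker(QPQ)$, which is orthogonal to the eigenvectors with nonzero eigenvalue. Set $q_i=x_i$ and $p_i=y_i$ for $i\le s$. The expansion of $PQ$ is unchanged, since the added terms have coefficient zero. This completes the existence part.

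\emph{(a) and (b).} For $i,j\le r$ we have $p_i\in\trP$ and $q_j\in\trQ$, hence
$$
\langle p_i,q_j\rangle=\langle Pp_i,Qq_j\rangle=\langle p_i,PQq_j\rangle=\mu_j\langle p_i,p_j\rangle=\mu_j\delta_{ij}.
$$
This gives both (a) and (b) at once. The word ``necessarily'' is covered because the computation uses only that $p_i\in\trP$, $q_i\in\trQ$, and that the expansion holds with orthonormal systems.

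\emph{(c).} Take $h\in\trH_c$. Then $h\in\trQ$ and $h\perp q_i$ for all $i$, so $Ph=PQh=\sum_i\mu_i\langle q_i,h\rangle p_i=0$. Hence $h\perp\trP$.

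The main obstacle is the bookkeeping for zero singular values among the first $r$ indices, since the generic SVD vectors for these need not lie in $\trQ$ or $\trP$. The dimension count above resolves it. Finally, because $\mu_i\in[0,1]$ (as $\|PQ\|\le1$), the principal angles $\arccos\mu_i$ lie in $[0,\pi/2]$ and are ordered increasingly. They are symmetric in $\trP$ and $\trQ$ because $QP=(PQ)^*$ has the same singular values.
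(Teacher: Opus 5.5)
Your proof is correct and follows the paper's route. You take the singular value (polar) decomposition of $PQ$, note that its support lies in $\trQ$ and its range in $\trP$, and read off $\langle p_i,q_j\rangle=\langle p_i,PQq_j\rangle=\mu_j\delta_{ij}$ from the expansion. You are actually more careful than the paper on two points. First, you explicitly justify completing the systems for the zero singular values $s<i\le r$ via $\dim(\trQ\cap\ker P)=\dim\trQ-s\ge r-s$, which the paper's ``immediately follows'' leaves implicit. Second, your direct evaluation covers all indices at once, rather than comparing coefficients in the basis $|p_i\rangle\langle q_j|$ and treating $r'<i\le r$ separately.
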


\vskip 5pt
\begin{proof} The first part of the proposition immediately follows from the polar decomposition of $PQ$, since $PQ$ has rank $r'\le r$, its support $\trS\subset\trQ$ and  its range  $\trR\subset\trP$.

The assertion (c) is immediate from $\trH_c\subset\ker P$. To prove (a) and (b), observe that
$$
PQ= \left(\sum_{i=1}^{r'} |p_i\rangle\langle p_i|\right) PQ  \left(\sum_{j=1}^{r'} |q_j\rangle\langle q_j|\right) 
= \sum_{i,j=1}^{r'}\langle p_i, q_j\rangle |p_i\rangle\langle q_j|,
$$
hence,
$$
\sum_{i,j=1}^{r'}\langle p_i, q_j\rangle |p_i\rangle\langle q_j| =\sum_{i=1}^{r'} \mu_i |p_i\rangle\langle q_i|.
$$
Since the operators $|p_i\rangle\langle q_j|$, $1\le i,j\le r'$, form a basis of the operators from $\trS$  to $\trR$,
it follows that
$$
\langle p_i,q_j\rangle = \delta_i^j \mu_i, \qquad 1\le i, j\le r'.
$$ 
Next, if $r'<i\le r$ then $p_i$ is in the kernel of $PQ$ and so is orthogonal to any $q_j$ and we also have $\mu_i=\langle p_i,q_i\rangle $. 

 As $\mu_i\ge 0$, we do have $\pi/2\ge \arccos\langle p_i,q_i\rangle\ge 0$. Lastly, as 
$$
QP= \sum_{i=1}^{r'} \mu_i |q_i\rangle\langle p_i|,
$$
the principal angles between $\trQ$ and $\trP$ are the same than those between $\trP$ and $\trQ$.
\end{proof}

\vskip 5pt
As a byproduct of Proposition \ref{prop1} we have  a complete description of the relative position of two subpsaces through the following joint reduction of their orthoprojections.

\vskip 5pt
\begin{cor}\label{corbd} With  notations of Proposition \ref{prop1}, assuming $r=\dim\trP\le \dim \trQ$,
let  $\{1,2,\ldots,r\}$ be partitioned with $J_1:=\{i : \mu_i=1\}$ and $J_{<1}:=\{i : \mu_i<1\}$, and let
$$
\trH_{=1}:={\mathrm{span}}\{q_i : i\in J_1\}= \trP\cap\trQ, \qquad
 \trH_{<1}:=\bigoplus_{i\in  J_{<1}}{\mathrm{span}}\{q_i, p_i\} .
$$
Further,  consider 
$\trH_c:=\trQ\ominus (\trH_{=1}\oplus  \trH_{<1})$ and   $\trH_0:=(\trP+\trQ)^{\perp}$.
Then with respect to the orthogonal decomposition
$$
\trH= \trH_{=1}\oplus \left\{\bigoplus_{i\in  J_{<1}}{\mathrm{span}}\{q_i, p_i\}\right\}\oplus \trH_c \oplus \trH_0
$$
and   orthonormal basis $\{q_i,q_i^0\}$ for ${\mathrm{span}}\{q_i, p_i\}$, $j\in J_{<1}$, we have the block-diagonalizations,
\begin{equation}\label{jd1}
Q=I\oplus \left\{\oplus^{|J_1|} \begin{pmatrix} 1&0 \\ 0&0\end{pmatrix}\right\}\oplus I\oplus 0,
\end{equation}
\begin{equation}\label{jd2}
P=I\oplus \left\{\bigoplus_{i\in J_{<1}} \begin{pmatrix} \cos^2 \alpha_i^{\uparrow}&\cos \alpha_i^{\uparrow}\sin \alpha_i^{\uparrow} \\ \cos \alpha_i^{\uparrow}\sin \alpha_i^{\uparrow}&\sin^2 \alpha_i^{\uparrow}\end{pmatrix}\right\}\oplus 0\oplus 0.
\end{equation}
Conversely, if these two joint block diagonalizations hold, then the principal angles between $\trP$ and $\trQ$ are $\alpha_i^{\uparrow}$, $i=1,\ldots, r$.
\end{cor}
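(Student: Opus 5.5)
The plan is to build everything from the system $\{p_i\},\{q_i\}$ given by Proposition \ref{prop1}, with $r=\rk P$, so that $\{p_i\}_{i=1}^r$ is an orthonormal basis of $\trP$. (Note the typo in \eqref{jd1}: the exponent should be $|J_{<1}|$.)

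\textbf{Orthogonal decomposition.} By (b), $\mu_i=1$ forces $p_i=q_i$. Hence $\mathrm{span}\{q_i: i\in J_1\}\subset\trP\cap\trQ$. Conversely, let $x\in\trP\cap\trQ$ and write $x=\sum_i\langle p_i,x\rangle p_i$. Then $x=Qx=\sum_i\langle p_i,x\rangle Qp_i$. From (a), (b) and (c), $Qp_i=\mu_i q_i$, because $p_i\perp q_j$ for $j\ne i$ and $p_i\perp\trH_c$. Comparing norms, $\|x\|^2=\sum|\langle p_i,x\rangle|^2\mu_i^2$, so $\langle p_i,x\rangle=0$ whenever $\mu_i<1$. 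This gives $\trH_{=1}=\trP\cap\trQ$.

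For $i\in J_{<1}$, the planes $\mathrm{span}\{q_i,p_i\}$ are two-dimensional, since $p_i\ne q_i$ are unit vectors with $\langle p_i,q_i\rangle=\mu_i\ge0$. They are mutually orthogonal, and orthogonal to $\trH_{=1}$, by orthonormality of each family together with (a). Their sum with $\trH_{=1}$ contains every $p_i$ and every $q_i$, hence all of $\trP$ and $\mathrm{span}\{q_i\}$. Adding $\trH_c$, which lies in $\trQ$ and is orthogonal to $\trP$ by (c) and to the $q_i$ by definition, the first three summands span exactly $\trP+\trQ$. Thus $\trH_0$ completes the decomposition of $\trH$.

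\textbf{Matrices of $Q$ and $P$.} In each plane, set $q_i^0=(p_i-\mu_iq_i)/\sqrt{1-\mu_i^2}$. Writing $\mu_i=\cos\alpha_i^{\uparrow}$, we get $p_i=\cos\alpha_i^{\uparrow}q_i+\sin\alpha_i^{\uparrow}q_i^0$.

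\emph{Matrix of $Q$.} $Q$ is the identity on $\trH_{=1}$ and on $\trH_c$, since both lie in $\trQ$, and vanishes on $\trH_0$. On each plane it fixes $q_i$. It kills $q_i^0$, because $q_i^0\perp q_j$ for all $j$ (by (a)) and $q_i^0\perp\trH_c$ (as a combination of $p_i$ and $q_i$). This gives \eqref{jd1}.

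\emph{Matrix of $P$.} We have $P=\sum_i|p_i\rangle\langle p_i|$. The terms with $i\in J_1$ give the identity on $\trH_{=1}$. For $i\in J_{<1}$, $|p_i\rangle\langle p_i|$ restricted to its plane has, in the basis $\{q_i,q_i^0\}$, exactly the displayed $\cos/\sin$ matrix. $P$ vanishes on $\trH_c$ and $\trH_0$, since both are orthogonal to $\trP$. This gives \eqref{jd2}.

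\textbf{Converse.} Suppose the two block forms hold. Then $PQ$ is block diagonal, equal to $I$ on the first block and to $\begin{pmatrix}\cos^2\alpha&0\\ \cos\alpha\sin\alpha&0\end{pmatrix}$ on each plane. The latter has the single nonzero singular value $\cos\alpha\sqrt{\cos^2\alpha+\sin^2\alpha}=\cos\alpha$. So the singular values of $PQ$ are the $1$'s (one for each index in $J_1$), the $\cos\alpha_i^{\uparrow}$, and zeros. The number of these first two kinds is $r=\rk P$. Since the $\mu_i$ are the singular values of $PQ$, the arccosines of the $r$ largest recover the $\alpha_i^{\uparrow}$, in agreement with Proposition \ref{prop1}.

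The main obstacle is the bookkeeping showing the summands exhaust $\trP+\trQ$ and are orthogonal. The key input there is the identity $Qp_i=\mu_iq_i$, obtained from (a)–(c); the rest is computation.
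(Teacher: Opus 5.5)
Your proof is correct and follows essentially the paper's argument. Like the paper, you define $q_i^0$ by $p_i=\cos\alpha_i^{\uparrow}q_i+\sin\alpha_i^{\uparrow}q_i^0$, read off the block of $P$ from the rank-one operator $|p_i\rangle\langle p_i|$ (the paper's short argument after \eqref{J2}), show $Qq_i^0=0$, and get the converse from the single nonzero singular value $\cos\alpha_i^{\uparrow}$ of each $2\times 2$ block of $PQ$. Beyond the paper, you actually verify $\trH_{=1}=\trP\cap\trQ$ and that the summands exhaust $\trP+\trQ$, which the paper only asserts, and you rightly note that the exponent in \eqref{jd1} should be $|J_{<1}|$.
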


\vskip 5pt

 Corollary \ref{corbd} is a  direct consequence of Proposition \ref{prop1} with $\dim\trP\le \dim \trQ$. Indeed, Proposition \ref{prop1} then says that there is an orthonormal basis of $\trH$,
$$
\{q_i\}_{i\in J_{=1}}\sqcup\{q_i,q_i^0\}_{i\in J_{<1}}\sqcup\{h_i\}_{i\in\sigma}
$$
such that, for some (possibly) vacuous  set of indices $\sigma$,
\begin{equation}\label{J1}\trQ=\bigoplus_{i\in J_{=1}}\bC q_i \oplus \bigoplus_{i\in J_{<1}}\bC q_i \oplus \bigoplus_{i\in\sigma}\bC h_i,
\end{equation}
\begin{equation}\label{J2}\trP=\bigoplus_{i\in J_{=1}}\bC q_i \oplus \bigoplus_{i\in J_{<1}}\bC\left( (\cos\alpha_i^{\uparrow})q_i +(\sin\alpha_i^{\uparrow})q_i^0\right).
\end{equation}
Since
$$
\begin{pmatrix} \cos\alpha_i^{\uparrow} 
\\ \sin\alpha_i^{\uparrow} \end{pmatrix}
\begin{pmatrix} \cos\alpha_i^{\uparrow} 
& \sin\alpha_i^{\uparrow} \end{pmatrix}=
 \begin{pmatrix} \cos^2 \alpha_i^{\uparrow} &\cos \alpha_i^{\uparrow}\sin \alpha_i^{\uparrow} \\ \cos \alpha_i^{\uparrow}\sin \alpha_i^{\uparrow} &\sin^2 \alpha_i^{\uparrow}\end{pmatrix},
$$
Corollary \ref{corbd} follows. Here is another, more detailed and computational proof.

\vskip 5pt
\begin{proof} The orthogonality conditions  (a) and (c) in Proposition \ref{prop1} shows the orthogonal decomposition of $\trH$. For $i\in J_{<1}$, we have $p_i\in\mathrm{span}\{q_i,p_i\}$ and $\langle p_i,q_i\rangle=\cos \alpha^{\uparrow}_i>0$. So there exist an orthonormal basis $\{q_i,q_i^0\}$ of $\trS_i:=\mathrm{span}\{q_i,p_i\}$ such that 
$$
p_i=(\cos \alpha^{\uparrow}_i)q_i+(\sin \alpha^{\uparrow}_i)q^0_i.
$$
Since $Qq_i=q_i$ and $Qp_i=QPp_i=(\cos \alpha^{\uparrow}_i)q_i$, we infer $Q((\sin \alpha^{\uparrow}_i)q^0_i)=0$ so $Qq^0_i=0$. This yields the block diagonalization for $Q$.
 To get the block diagonalization for $P$, recall that $\trH_c\perp\trP$, and for $i\in J_{<1}$,
$$
P(q_i)=PQ (q_i) = (\cos \alpha^{\uparrow}_i)p_i=(\cos^2 \alpha^{\uparrow}_i)q_i+(\cos\alpha_i^{\uparrow}\sin \alpha^{\uparrow}_i)q^0_i,
$$
yielding the first column of the matrix representing $P$ on $\trS_i$. Next, 
$$
P((\sin \alpha^{\uparrow}_i)q^0_i)=P(p_i-(\cos \alpha^{\uparrow}_i)q_i)=p_i- (\cos^2 \alpha^{\uparrow}_i)p_i=(\sin^2\alpha^{\uparrow}_i)p_i
$$
so
$$
P(q^{0}_i)=(\sin\alpha^{\uparrow}_i)p_i=(\sin\alpha^{\uparrow}_i\cos \alpha^{\uparrow}_i)q_i+(\sin^2 \alpha^{\uparrow}_i)q^0_i
$$
completing the representation of the action of $P$ on $\trS_i$.

To get the last assertion, observe that for $i\in J_{<1}$ the nonzero singular value  of 
$$
 \begin{pmatrix} \cos^2 \alpha_i^{\uparrow}&\cos \alpha_i^{\uparrow}\sin \alpha_i^{\uparrow} \\ \cos \alpha_i^{\uparrow}\sin \alpha_i^{\uparrow}&\sin^2 \alpha_i^{\uparrow}\end{pmatrix}
\begin{pmatrix} 1&0\\ 0&0
\end{pmatrix}
$$
is $\sqrt{\cos^4\alpha_i^{\uparrow} +  \cos^2 \alpha_i^{\uparrow}\sin^2 \alpha_i^{\uparrow}}= \cos \alpha_i^{\uparrow}$.
\end{proof}

\vskip 5pt
Needless to say, principal angles are invariant under the action of a unitary operator.

\vskip 5pt
\begin{cor} Let $(\trP,\trQ)$ and $(\trP',\trQ')$ be two pairs of  subspaces in $\bC^n$ or $\bR^n$ such that 
$r:=\dim\trP=\dim\trP'\le\dim\trQ=\dim\trQ'.$
Then the following conditions are equivalent :
\begin{itemize}
\item[(a)] $\alpha_i^{\uparrow}(\trP,\trQ)=\alpha_i^{\uparrow}(\trP',\trQ')$ for all $i=1,\cdots,r$.
\item[(b)] $U(\trP)=\trP'$ and $U(\trQ)=\trQ'$ for some unitary $U\in\bM_n$.
\end{itemize}
\end{cor}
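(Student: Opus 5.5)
(b)$\Rightarrow$(a) is immediate. If $U$ is unitary with $U(\trP)=\trP'$ and $U(\trQ)=\trQ'$, then the orthoprojections satisfy $P'=UPU^*$ and $Q'=UQU^*$. Hence $P'Q'=U(PQ)U^*$ has the same singular values $\mu_i$ as $PQ$. Since Proposition \ref{prop1} defines the principal angles through $\cos\alpha_i^{\uparrow}=\langle p_i,q_i\rangle=\mu_i$, the angles coincide.

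For (a)$\Rightarrow$(b), I would use the joint block diagonalization of Corollary \ref{corbd} for both pairs and define $U$ by matching the two orthonormal bases. The equality of the angles gives equal multisets $\{\mu_i\}$, so the index sets $J_1$ and $J_{<1}$ have the same cardinalities for both pairs. It follows that $\dim(\trP\cap\trQ)=|J_1|=\dim(\trP'\cap\trQ')$, and the blocks ${\mathrm{span}}\{q_i,p_i\}$ for $i\in J_{<1}$ correspond one to one with the same angle $\alpha_i^{\uparrow}$.

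The remaining summands must also have matching dimensions. We have $\dim\trH_c=\dim\trQ-|J_1|-|J_{<1}|=\dim\trQ-r$, and the same count holds for the primed pair, so the two $\trH_c$ agree in dimension. Then $\dim\trH_0=n-|J_1|-2|J_{<1}|-\dim\trH_c$ agrees as well. I would choose any orthonormal bases of $\trH_c,\trH_c'$ and of $\trH_0,\trH_0'$, and define $U$ as the unitary sending the ordered basis $\{q_i\}_{J_1}\sqcup\{q_i,q_i^0\}_{J_{<1}}\sqcup(\text{basis of }\trH_c)\sqcup(\text{basis of }\trH_0)$ to the corresponding primed basis.

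In these bases, \eqref{jd1} and \eqref{jd2} show that $Q$ and $Q'$ have the identical matrix, and likewise $P$ and $P'$. Therefore $UQU^*=Q'$ and $UPU^*=P'$, which gives $U(\trQ)=\trQ'$ and $U(\trP)=\trP'$. In the real case the same construction produces a real orthogonal $U$, since all basis vectors can be chosen real.

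The only delicate point is this dimension bookkeeping, together with the convention that the indices in $J_{<1}$ are ordered by increasing angle in both pairs so that the $2\times2$ blocks match. Corollary \ref{corbd} supplies everything else.
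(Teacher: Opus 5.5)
Your proof is correct and follows the paper's route. For (b)$\Rightarrow$(a) you use $P'=UPU^*$, $Q'=UQU^*$ and the equality of the singular values of $PQ$ and $P'Q'$. For (a)$\Rightarrow$(b) you match the joint block diagonalizations of Corollary~\ref{corbd} for the two pairs; the paper simply asserts that this corollary yields the unitary, whereas you also spell out the dimension count for $\trH_c$ and $\trH_0$ that this assertion tacitly relies on.
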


\vskip 5pt
\begin{proof} (b) says that $U$ induces an isometric operator from $\trP$ to $\trP'$. Thus $U^*$ induces the inverse isometry from $\trP'$ to $\trP$. Therefore the respective projections $P,P'$ on $\trP,\trP'$ satisfy $P'=UPU^*$. Similary $Q'=UQU^*$. Hence $PQ$ and $P'Q'$ have the same singular values, so $(a)$ holds.

Conversely, if (a) holds, then the previous corollary shows that $P'=UPU^*$ and $Q'=UQU^*$
for some unitary $U$, and so  $U(\trP)=\trP'$ and $U(\trQ)=\trQ'$.
\end{proof}

\vskip 5pt
From the joint block-diagonalizations of Corollary \ref{corbd}, we may derive several facts, for instance on the spectra of $P\pm Q$ or $PQ\pm QP$, by  focusing on two-by-two matrices.
Some examples are given in the next corollaries.

\vskip 5pt
\begin{cor}\label{corDT}
Let $P,Q\in\bM_n$ be two nonzero orthoprojections. Then
$$
\| P+Q\|_{\infty} =1+\| PQ\|_{\infty}.
$$
\end{cor}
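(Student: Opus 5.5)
Since the operator norm is invariant under unitary conjugation, I will compute $\|P+Q\|_\infty$ from the joint block diagonalization of Corollary \ref{corbd}. By the symmetry $P+Q=Q+P$ and $\|PQ\|_\infty=\|(PQ)^*\|_\infty=\|QP\|_\infty$, I may assume $\dim\trP\le\dim\trQ$; otherwise I exchange the roles of $P$ and $Q$. Then $P+Q$ is a direct sum of the following blocks: $2I$ on $\trH_{=1}$, $I$ on $\trH_c$ (which is where $Q=I$ and $P=0$), $0$ on $\trH_0$, and, for each $i\in J_{<1}$, the $2\times 2$ block
$$
B_i=\begin{pmatrix} 1+\cos^2\alpha_i & \cos\alpha_i\sin\alpha_i\\ \cos\alpha_i\sin\alpha_i & \sin^2\alpha_i\end{pmatrix},
$$
where I write $\alpha_i=\alpha_i^{\uparrow}$. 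The norm of a direct sum of positive semidefinite blocks is the maximum of the largest eigenvalues of the blocks, so the proof reduces to computing these eigenvalues.

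For $B_i$, the trace is $2$ and the determinant is $(1+\cos^2\alpha_i)\sin^2\alpha_i-\cos^2\alpha_i\sin^2\alpha_i=\sin^2\alpha_i$. The eigenvalues are therefore $1\pm\sqrt{1-\sin^2\alpha_i}=1\pm\cos\alpha_i$, and the largest is $1+\cos\alpha_i=1+\mu_i$. The block $2I$ on $\trH_{=1}$ gives $2=1+\mu_i$ for $i\in J_1$. The block on $\trH_c$ contributes $1$ and the block on $\trH_0$ contributes $0$. Hence
$$
\|P+Q\|_\infty=\max\Big\{\max_{1\le i\le r}(1+\mu_i),\ 1\ (\text{if }\trH_c\neq 0),\ 0\Big\}=1+\mu_1 .
$$
Here I use $r\ge1$, which holds because $P$ and $Q$ are nonzero, and $1+\mu_1\ge 1$. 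Since $\mu_1=\|PQ\|_\infty$ is the largest singular value of $PQ$, this gives the claimed identity.

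The main point requiring care is the bookkeeping of which summands in Corollary \ref{corbd} are present. The space $\trH_c$ may be nonzero, and then it contributes the eigenvalue $1$ (not $0$). This causes no difficulty because $1\le 1+\mu_1$. The other thing to check is the reduction to the case $\dim\trP\le\dim\trQ$, which is justified by the symmetry observed in the first paragraph.
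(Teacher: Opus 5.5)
Your proof is correct and follows the paper's approach: both read off $\|P+Q\|_\infty$ from the joint block diagonalization of Corollary~\ref{corbd}, where each $2\times 2$ block has largest eigenvalue $1+\cos\alpha_i^{\uparrow}$. The paper first disposes of the intersecting and orthogonal cases separately, whereas you handle them in one uniform count, together with the $\trH_c$ and $\trH_0$ summands and the reduction to $\dim\trP\le\dim\trQ$, which makes your version slightly more complete.
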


\vskip 5pt
\begin{proof} If $\trP$ and $\trQ$ intersect, or if $\trP$ and $\trQ$ are orthogonal, the result is trivial.  Indeed (1) if $\trP$ and $\trQ$ are orthogonal then $PQ=0$ and $P+Q$ is an orthoprojection (and we have $1=1$ in the corollary); (2) if $\trP$ and $\trQ$ intersect, pick a unit vector $x\in \trP\cap\trQ$, and note that $\|(P+Q)x\|= 2 =1+\|PQx\|$
(and we have $2=2$ in the corollary).

For the other cases, we use the joint diagonalization of Corollary \ref{corbd} for the respective orthoprojections $P$ and $Q$  onto $\trP$ and $\trQ$. Computing the characteristic polynomial of
$$
 \begin{pmatrix} 1+ \cos^2 \alpha_i^{\uparrow}&\cos \alpha_i^{\uparrow}\sin \alpha_i^{\uparrow} \\ \cos \alpha_i^{\uparrow}\sin \alpha_i^{\uparrow}&\sin^2 \alpha_i^{\uparrow}\end{pmatrix}
$$
reveals that its largest root is $1+\cos\alpha_i^{\uparrow}$. Since $1+\cos\alpha_1^{\uparrow}=1+\|PQ\|_{\infty}$, the equality follows.
\end{proof}

\vskip 5pt
\begin{cor}\label{corgap}
Let $P,Q\in\bM_n$ be two orthoprojections of rank $k$ with respective ranges $\trP$ and $\trQ$. Then
$$
\| P-Q\|_{\infty} =\sin \alpha_k^{\uparrow}(\trP,\trQ).
$$
\end{cor}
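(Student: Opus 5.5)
We will use the joint block-diagonalization of Corollary \ref{corbd}. Since $\rk P=\rk Q=k$, we have $r=k=\dim\trP\le\dim\trQ$. Then $\trH_c=\trQ\ominus(\trH_{=1}\oplus\trH_{<1})$ has dimension $k-|J_1|-|J_{<1}|=0$, so the summand $\trH_c$ disappears. Relative to the decomposition
$$
\trH=\trH_{=1}\oplus\Big\{\bigoplus_{i\in J_{<1}}\mathrm{span}\{q_i,p_i\}\Big\}\oplus\trH_0 ,
$$
the difference $P-Q$ is block diagonal. It is $0$ on $\trH_{=1}$ and $0$ on $\trH_0$. On each two-dimensional block $\mathrm{span}\{q_i,q_i^0\}$, $i\in J_{<1}$, it is
$$
D_i=\begin{pmatrix} \cos^2 \alpha_i^{\uparrow}-1&\cos \alpha_i^{\uparrow}\sin \alpha_i^{\uparrow} \\ \cos \alpha_i^{\uparrow}\sin \alpha_i^{\uparrow}&\sin^2 \alpha_i^{\uparrow}\end{pmatrix}
=\sin\alpha_i^{\uparrow}\begin{pmatrix} -\sin \alpha_i^{\uparrow}&\cos \alpha_i^{\uparrow} \\ \cos \alpha_i^{\uparrow}&\sin \alpha_i^{\uparrow}\end{pmatrix}.
$$

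The matrix $\begin{pmatrix} -\sin\alpha&\cos\alpha\\ \cos\alpha&\sin\alpha\end{pmatrix}$ is a symmetric orthogonal matrix (a reflection) with eigenvalues $\pm1$. Hence $D_i$ has eigenvalues $\pm\sin\alpha_i^{\uparrow}$, and $\|D_i\|_\infty=\sin\alpha_i^{\uparrow}$. Alternatively, one can check directly that $\mathrm{Tr}\,D_i=0$ and $\det D_i=-\sin^2\alpha_i^{\uparrow}$.

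For a block-diagonal operator, the norm is the maximum of the block norms. Therefore
$$
\|P-Q\|_\infty=\max\Big(\{0\}\cup\{\sin\alpha_i^{\uparrow}: i\in J_{<1}\}\Big).
$$
For $i\in J_1$ we have $\alpha_i^{\uparrow}=0$, so $\sin\alpha_i^{\uparrow}=0$. The maximum is thus $\max_{1\le i\le k}\sin\alpha_i^{\uparrow}=\sin\alpha_k^{\uparrow}$, because the angles are ordered increasingly in $[0,\pi/2]$ and $\sin$ is increasing there. The degenerate cases need no separate argument: if $\trP=\trQ$, then all angles vanish and both sides are $0$.

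The only point that needs care is the dimension count showing $\trH_c=0$. It relies on the equal ranks, and it matters: without it, $P-Q=-I$ on $\trH_c$ and the norm would be $1$. So this is the main obstacle. The rest is the $2\times2$ computation above.
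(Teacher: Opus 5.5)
Your proof is correct and follows the paper's own argument: you use the joint block-diagonalization of Corollary \ref{corbd} and write each $2\times 2$ block as $\sin\alpha_i^{\uparrow}$ times a symmetric orthogonal matrix (the paper actually displays the block of $Q-P$, which has the same norm). Your explicit check that $\trH_c=0$, which holds because the ranks are equal, is a step the paper leaves implicit, and it is exactly where the equal-rank hypothesis enters.
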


\vskip 5pt
\begin{proof} We may argue as in the previous proof or simply note
that the blocks for $P-Q$ in the joint  block-diagonalization of $P$ and $Q$ are
$$
 \begin{pmatrix} 1- \cos^2 \alpha_i^{\uparrow}&-\cos \alpha_i^{\uparrow}\sin \alpha_i^{\uparrow} \\ -\cos \alpha_i^{\uparrow}\sin \alpha_i^{\uparrow}&-\sin^2 \alpha_i^{\uparrow}\end{pmatrix}
=\sin \alpha_i^{\uparrow}
 \begin{pmatrix} \sin \alpha_i^{\uparrow}&-\cos \alpha_i^{\uparrow} \\ -\cos \alpha_i^{\uparrow}&-\sin \alpha_i^{\uparrow}\end{pmatrix}
$$
with operator norms $\sin \alpha_i^{\uparrow}$. 
\end{proof}

\vskip 5pt
\begin{cor}\label{corperp}
Let $(\trP,\trQ)$  be a pair of  complementary subspaces in $\bC^n$ (or  $\bR^n$) with $r:=\dim \trP\le \dim\trQ$. Then, for all $j=1,\ldots, r$,
$$
\alpha_j^{\uparrow} (\trP, \trQ)= \alpha_j^{\uparrow} (\trP^{\perp}, \trQ^{\perp}).
$$
Hence,  there exists a unitary $U\in\bM_n$ such that $U(\trP)=\trQ^{\perp}$ and $U(\trQ)=\trP^{\perp}$.
\end{cor}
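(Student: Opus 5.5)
The plan is to read off the relative position of $(\trQ^{\perp},\trP^{\perp})$ from the joint block-diagonalization of Corollary \ref{corbd} applied to $(\trP,\trQ)$. Complementarity means $\trP\cap\trQ=\{0\}$ and $\trP+\trQ=\trH$, hence $\dim\trQ=n-r$. The dimensions then match as needed: $\dim\trQ^{\perp}=r\le n-r=\dim\trP^{\perp}$. So the pair $(\trQ^{\perp},\trP^{\perp})$ is in the same situation as $(\trP,\trQ)$, with the smaller space listed first. Since principal angles are symmetric by Proposition \ref{prop1}, it suffices to compute the singular values of $(I-Q)(I-P)$.

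First, in Corollary \ref{corbd} we have $\trH_{=1}=\trP\cap\trQ=\{0\}$, so $J_1=\emptyset$ and every $i\le r$ lies in $J_{<1}$. We also have $\trH_0=(\trP+\trQ)^{\perp}=\{0\}$. Hence
$$
\trH=\left\{\bigoplus_{i=1}^{r}{\mathrm{span}}\{q_i,p_i\}\right\}\oplus\trH_c .
$$
On each two-dimensional block we have $I-Q=\begin{pmatrix}0&0\\0&1\end{pmatrix}$ and
$$
I-P=\begin{pmatrix}\sin^2\alpha_i^{\uparrow}&-\cos\alpha_i^{\uparrow}\sin\alpha_i^{\uparrow}\\ -\cos\alpha_i^{\uparrow}\sin\alpha_i^{\uparrow}&\cos^2\alpha_i^{\uparrow}\end{pmatrix}.
$$
On $\trH_c$, which lies in $\trQ$ and is orthogonal to $\trP$, we have $I-Q=0$ and $I-P=I$. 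Therefore $(I-Q)(I-P)$ vanishes on $\trH_c$. On the $i$-th block it equals $\begin{pmatrix}0&0\\ -\cos\alpha_i^{\uparrow}\sin\alpha_i^{\uparrow}&\cos^2\alpha_i^{\uparrow}\end{pmatrix}$. As in the end of the proof of Corollary \ref{corbd}, this block has the single nonzero singular value $\sqrt{\cos^2\alpha_i^{\uparrow}\sin^2\alpha_i^{\uparrow}+\cos^4\alpha_i^{\uparrow}}=\cos\alpha_i^{\uparrow}$; if $\alpha_i^{\uparrow}=\pi/2$ the block is simply $0$.

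Consequently, the singular values of $(I-Q)(I-P)$ are $\cos\alpha_1^{\uparrow},\ldots,\cos\alpha_r^{\uparrow}$ followed by zeros. Since $\min\{\dim\trQ^{\perp},\dim\trP^{\perp}\}=r$, Proposition \ref{prop1} gives $\alpha_j^{\uparrow}(\trQ^{\perp},\trP^{\perp})=\alpha_j^{\uparrow}(\trP,\trQ)$ for $j=1,\ldots,r$. By symmetry of principal angles, these are also the angles between $\trP^{\perp}$ and $\trQ^{\perp}$.

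For the last assertion, apply the preceding corollary (the equivalence of (a) and (b)) to the pairs $(\trP,\trQ)$ and $(\trQ^{\perp},\trP^{\perp})$. They satisfy $\dim\trP=\dim\trQ^{\perp}=r\le n-r=\dim\trQ=\dim\trP^{\perp}$ and have equal principal angles. This yields a unitary $U$ with $U(\trP)=\trQ^{\perp}$ and $U(\trQ)=\trP^{\perp}$.

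The only delicate point is the bookkeeping in the first step. One must check that complementarity kills both $\trH_{=1}$ and $\trH_0$, so that the orthogonal complements are fully described by the $2\times2$ blocks together with $\trH_c$. One must also note that blocks with $\mu_i=0$ are still genuinely two-dimensional, because $p_i\perp q_i$ in that case.
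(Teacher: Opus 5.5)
Your proof is correct and follows essentially the paper's route: complementarity forces $J_1=\emptyset$ and $\trH_0=\{0\}$ in Corollary \ref{corbd}, and the angles of the complements are then read off block by block. The paper writes explicit bases $\{q_i^0\}$ of $\trQ^{\perp}$ and $\{-\sin\alpha_i^{\uparrow}q_i+\cos\alpha_i^{\uparrow}q_i^0\}\cup\{h_i\}$ of $\trP^{\perp}$, whereas you compute the singular values of $(I-Q)(I-P)$ on each block, which amounts to the same thing; your derivation of $U$ from the preceding unitary-equivalence corollary also correctly spells out the ``Hence'' step that the paper leaves implicit.
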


\vskip 5pt
\begin{proof} This can be derived from corollary \ref{corbd}, or equivalently from \eqref{J1} and \eqref{J2}. Since we have complementary subspaces, \eqref{J1} and \eqref{J2} reads as
\begin{equation*}\trQ= \bigoplus_{i\in J_{<1}}\bC q_i \oplus \bigoplus_{i\in\sigma}\bC h_i,
\qquad \trP= \bigoplus_{i\in J_{<1}}\bC\left( (\cos\alpha_i^{\uparrow})q_i +(\sin\alpha_i^{\uparrow})q_i^0\right)
\end{equation*}
and so
\begin{equation*}\trQ^{\perp}= \bigoplus_{i\in J_{<1}}\bC q_i^0,
\qquad \trP^{\perp}= \bigoplus_{i\in J_{<1}}\bC\left( (-\sin\alpha_i^{\uparrow})q_i +(\cos\alpha_i^{\uparrow})q_i^0\right) \oplus \bigoplus_{i\in\sigma}\bC h_i,
\end{equation*}
so that for all $j\in J_{<1}=\{1,\ldots,r\}$, we have $\alpha_j^{\uparrow} (\trP, \trQ)= \alpha_j^{\uparrow} (\trP^{\perp}, \trQ^{\perp})$.
\end{proof}

\vskip 5pt
\begin{remark} Principal angles (or canonical angles) have been introduced by Jordan in \cite{Jo}. Relations \eqref{J1}-\eqref{J2} are in this venerable article as well as Corollary \ref{corperp}. Corollary \ref{corbd} is sometimes associated to a 1983 work of Wedin (see \cite[p.\ 1420]{BoSp}). Principal angles are often introduced in the literature via the CS-decomposition, as in the survey \cite{BoSp}. Using the polar decomposition as in Proposition \ref{prop1} is  both simple and quite natural, since we may obviously expect that the relative position between
two subspaces is described by the singular values of the product of their projections $PQ$ (or the eigenvalues of $PQP$). Corollary \ref{corDT} is Duncan-Taylor's inequality \cite{DT}, for a remarkable recent generalization, see \cite{AC}.
The sine of the largest principal angle in Corollary \ref{corgap} is the gap between the two subspaces. A quite deep paper \cite{QLZ} develops much more metrics on the set of subspaces of a fixed dimension.
\end{remark}

\section{Structure of involutions}

An involution $S$ in the space $\bM_n$ of  square complex (or real) matrices of size $n$ is a matrix such that $S^2=I$, the identity. This is also called a symmetry. Then 
$$
E=(I+S)/2
$$
is a projection (the spectral projection of $S$ associated to $1$), and $F=I-E=(I-S)/2$ is the complementary projection (the spectral projection of $S$ associated to $-1$). Thus $S=E-F$ and letting $\trE$ and $\trF$ be the respective ranges of $E$ and $F$ one has the direct sum decomposition ($\trH$ still denotes $\bC^n$ or $\bR^n$),
$$
\trH =\trE \oplus \trF.
$$
We also use the notation $A\oplus B$ (and more generally $A_1\oplus\cdots\oplus A_m$) for the block diagonal matrices
of the form
$$
\begin{pmatrix} A&0 \\ 0&B
\end{pmatrix}, \quad A\in\bM_n, \ B\in \bM_{n'}.
$$

Given $X,Y\in\bM_n$, the symbol $X\simeq Y$ means $X=UYU^*$ for some unitary (orthogonal) matrix $U$. Any involution has a trace in $\bZ$. The following theorem shows the structure of an involution. 

\begin{theorem}\label{th-inv} Let $S\in\bM_n$ be an involution with ${\mathrm{Tr}}S=\eps k$ for some $k\in\bN$ and $\eps=\pm1$. Then, with   $m=(n-k)/2$,
$$
S\simeq(\eps I_k)\oplus\left\{\bigoplus_{i=1}^{m}\begin{pmatrix}0&x_i^{-1}\\ x_i& 0\end{pmatrix}\right\}\simeq(\eps I_k)\oplus\left\{\bigoplus_{i=1}^{m}
\begin{pmatrix}1&a_i\\ 0& -1\end{pmatrix}\right\}
$$
where    $0<x_1\le \ldots\le x_m\le 1$ and $a_1\ge \ldots\ge a_m\ge 0$,  and  there are exactly $m$  principal angles $\alpha_1^{\uparrow}\le \cdots\le \alpha_m^{\uparrow}$ between the eigenspaces of $S$, given by
$$
\alpha_i^{\uparrow} = \arcsin \frac{2}{x_i+x_i^{-1}}=2\arctan x_i=\arccotan(a_i/2).
$$
Moreover, $a_i=x_i^{-1}-x_i$ for all $i=1,\ldots,m$.
\end{theorem}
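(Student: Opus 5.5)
We reduce everything to the joint block-diagonalization of Corollary \ref{corbd}, applied to the orthoprojections $P$ and $Q$ onto the eigenspaces $\trE$ and $\trF$ of $S$. Since $\trH=\trE\oplus\trF$ is a direct sum, $\trE\cap\trF=\{0\}$ and $\trE+\trF=\trH$, so $(\trE,\trF)$ is a pair of complementary subspaces. Replacing $S$ by $-S$ if necessary (which swaps $\trE$ and $\trF$ and changes $\eps$), we may assume $r=\dim\trE\le\dim\trF$. Then $\mathrm{Tr}\,S=\dim\trE-\dim\trF=-k$, so $\eps=-1$ corresponds to this normalization, and $r=(n-k)/2=m$.

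In Corollary \ref{corbd} we have $\trH_{=1}=\trE\cap\trF=0$ and $\trH_0=(\trE+\trF)^\perp=0$, so $J_{<1}=\{1,\dots,m\}$. Also $\trH_c$ has dimension $\dim\trF-m=k$. We get $\trH=\bigoplus_i \mathrm{span}\{q_i,q_i^0\}\oplus\trH_c$. On $\trH_c\subset\trF$, which is invariant under $S$, the involution acts as $-I_k=\eps I_k$.

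On each two-dimensional block $\mathrm{span}\{q_i,q_i^0\}$ with angle $\alpha=\alpha_i^{\uparrow}\in(0,\pi/2]$, the block is invariant under $S$, because $S$ is determined by $\trE,\trF$ and both meet the block in the lines $\bC p_i$ and $\bC q_i$. In the basis $\{q_i,q_i^0\}$ we have $q_i\in\trF$ and $p_i=(\cos\alpha,\sin\alpha)\in\trE$, so
$$S|_{\text{block}}=\begin{pmatrix}-1&b\\0&1\end{pmatrix}$$
for some $b$. The condition $Sp_i=p_i$ gives $-\cos\alpha+b\sin\alpha=\cos\alpha$, hence $b=2\cot\alpha$. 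Swapping the order of the basis vectors (a unitary conjugation) turns this into $\begin{pmatrix}1&a\\0&-1\end{pmatrix}$ with $a=2\cot\alpha\ge0$, i.e.\ $\alpha=\arccotan(a/2)$. Here $a$ decreases as $\alpha$ increases, matching the ordering $a_1\ge\cdots\ge a_m\ge0$.

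For the off-diagonal form, we use that $T=\begin{pmatrix}1&a\\0&-1\end{pmatrix}$ is unitarily similar to $\begin{pmatrix}0&x^{-1}\\x&0\end{pmatrix}$ when $a=x^{-1}-x$. Both are $2\times2$ matrices with trace $0$, determinant $-1$, and the same Frobenius norm, since $2+a^2=x^2+x^{-2}$ exactly when $a=x^{-1}-x$. By the standard criterion (trace, determinant and $\mathrm{Tr}\,A^*A$ determine $2\times2$ unitary similarity; in the real case one can simply exhibit the rotation) they are unitarily similar. Given $a\ge0$, the equation $a=x^{-1}-x$ has a unique solution $x\in(0,1]$, and $x$ increases as $a$ decreases, giving $0<x_1\le\cdots\le x_m\le1$.

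It remains to verify the angle formulas. From $a/2=\cot\alpha$ and $a=x^{-1}-x$ we get $\cot\alpha=(1-x^2)/(2x)$. With $x=\tan(\alpha/2)$, the double-angle formula gives $\cot\alpha=(1-t^2)/(2t)$ for $t=\tan(\alpha/2)$, so $x=\tan(\alpha/2)$, i.e.\ $\alpha=2\arctan x$, using monotonicity on $(0,1]$. Then $\sin\alpha=2t/(1+t^2)=2/(x+x^{-1})$.

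The main obstacle is bookkeeping rather than depth: handling the sign $\eps$ via $S\mapsto -S$ consistently with the ordering of angles, and justifying the real-case unitary (orthogonal) similarity in the $2\times2$ step. For the latter, we will exhibit an explicit rotation, or alternatively use the singular values $x^{-1},x$ of both matrices, which determine the principal angle via Corollary \ref{corbd} once more.
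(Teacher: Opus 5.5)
Your proof is correct in substance, but it runs in the opposite direction from the paper's, and one step needs a small repair (below). The paper does not use the joint reduction of Corollary \ref{corbd}. It proceeds as follows:
\begin{itemize}
\item it writes $E=(I+S)/2$ as $\begin{pmatrix} I_d&R\\0&0\end{pmatrix}$ in an orthonormal basis beginning with a basis of $\trE$;
\item it takes a singular value decomposition of the corner $R$, which gives the triangular normal form directly;
\item it converts each $2\times 2$ block to the off-diagonal form by the eigenvalue/Frobenius-norm argument;
\item only then does it identify the angles, by computing the singular values of $E_1F_1$ for the orthoprojections $E_1,F_1$ onto the ranges of the block-diagonalized $E$ and $F$, followed by trigonometric manipulations.
\end{itemize}
You instead take the principal-angle decomposition of $(\trE,\trF)$ as known and read $S$ off each invariant plane $\mathrm{span}\{q_i,p_i\}$. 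Then $a=2\cot\alpha$ comes from the single equation $Sp_i=p_i$, and no computation of $E_1F_1$ is needed. Your route is shorter and makes the geometric meaning of the $a_i$ immediate. The paper's route needs only an SVD, not the joint reduction, and that is what lets the Remark following the theorem recover \eqref{jd1}--\eqref{jd2}, i.e.\ the existence of principal angles, from the structure of involutions. With your proof, that remark would be circular.

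The step that needs repair is the swap. Conjugating $\begin{pmatrix}-1&b\\0&1\end{pmatrix}$ by the swap of the two basis vectors gives $\begin{pmatrix}1&0\\b&-1\end{pmatrix}$, which is lower triangular, not the form you claim. Use instead the orthonormal basis $\{p_i,\,-\sin\alpha\,q_i+\cos\alpha\,q_i^0\}$ of the block, which begins with the $+1$-eigenvector. Since $-\sin\alpha\,q_i+\cos\alpha\,q_i^0=(\cot\alpha)\,p_i-(\sin\alpha)^{-1}q_i$, the matrix of $S$ in this basis is exactly $\begin{pmatrix}1&2\cot\alpha\\0&-1\end{pmatrix}$. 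Alternatively, apply the trace, determinant and Frobenius-norm criterion you already invoke in the next paragraph.

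Two smaller points of bookkeeping. Undoing the normalization $S\mapsto -S$ requires $-B\simeq B$ for each $2\times 2$ block; this follows from conjugating $\begin{pmatrix}0&x^{-1}\\x&0\end{pmatrix}$ by ${\mathrm{diag}}(1,-1)$. The case $m=0$, i.e.\ $S=\pm I$, should be set aside, since Proposition \ref{prop1} assumes nonzero projections.
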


\vskip 5pt
We see that for $0<x\le 1$,
$$
2\arctan x= \arcsin \frac{2}{x+x^{-1}}.
$$
Despite of the symmetric form of the right hand side, this formula does not extend to $x>1$. In fact, it is well known that
$\arctan x + \arctan x^{-1}= \arctan x + \arccotan x = \pi/2$.

\vskip 5pt
\begin{proof} Let
$
E=(I+S)/2
$
and
let $\trE$ be the range of $E$.
We assume $0<d:=\dim \trE<n$, since the cases  $\dim\trE=0,n$ are trivial. Replacing $S$ by $-S$ if necessary, since
$$
-
\begin{pmatrix}0& x^{-1}\\ x& 0\end{pmatrix}\simeq 
\begin{pmatrix}0& x^{-1}\\ x& 0\end{pmatrix},
$$
we may also assume that ${\mathrm{Tr\,}}S\ge 0$, that is $\dim\trE\ge \dim\trF$ where $\trF$ is the 
range of the complementary projection $F=I-E$. Then
 $E$ has a block  matrix representation, with respect to an orthonormal basis of $\bC^n$ starting with a basis of $\trE$, of the form
$$
E\simeq\begin{pmatrix} I_d& R \\
0&0
\end{pmatrix} 
$$
where $R\in\bM_{d, n-d}$, $d\ge n-d$.   We have  $R=UDV$ with  unitary  $U\in\bM_d$, $V\in\bM_{n-d}$, and a rectangular diagonal matrix $D$ with the singular values of $R$ arranged in nondecreasing order down to the diagonal. The unitary congruence implemented by $U^*\oplus V$ shows that
$$
E\simeq \begin{pmatrix} I_d& D \\
0&0
\end{pmatrix}.
$$
Recall that $D$ is a rectangular matrix of the form
$$
D=\begin{pmatrix} \Delta \\ 0 \end{pmatrix}
$$
where $\Delta\in\bM_{n-d}$ is diagonal with the singular values of $R$ down to the diagonal and $0$ stands here for  the zero  $(2d-n)\times d$  matrix ( vacuous if $2d=n$). Thus, 
$$
E\simeq \begin{pmatrix} I_{n-d} & 0&  \Delta \\
0& I_{2d-n}&0\\
0&0&0
\end{pmatrix} \simeq \begin{pmatrix} I_{n-d} & \Delta&  0 \\
0& 0&0\\
0&0&I_{2d-n}
\end{pmatrix}.
$$
Since $S=2E-I$ we have
$$
S
\simeq \begin{pmatrix} I_{n-d} & \Delta&  0 \\
0& -I_{n-d}&0\\
0&0&I_{2d-n}
\end{pmatrix}.
$$
Hence
$$
S\simeq I_{2d-n} \oplus\left\{\bigoplus_{i=1}^m \begin{pmatrix} 1&a_i \\ 0 &-1\end{pmatrix}\right\}
$$
where $2d-n={\mathrm{Tr\,}}S$, $m=2d$, and $0\le a_1\le \cdots \le a_m $ are the diagonal entries of $\Delta$.
This proves the second decomposition of our theorem.

Let $a\ge 0$. To prove the first decomposition of the theorem, it suffices to show that 
$$
\begin{pmatrix} 1&a \\ 0 &-1\end{pmatrix}\simeq \begin{pmatrix} 0& x^{-1}\\ x&0\end{pmatrix} 
$$
for some $1\ge x>0$. Since these two real matrices have the same eigenvalues $\pm1$, by triangularization, this holds when the two matrices have the same Hilbert-Schmidt norm. Thus $x^{-2}+x^2=2+a^2$, hence $a=x^{-1}-x$, giving the last assertion of the theorem.

From the unitary congruence that we have just established for $S$, we infer that
we have two unitary congruences implemented by the same unitary matrix $V$,
$$
E=\frac{S+I}{2}=V \cdot  \left(\frac{(1+\eps) I_k}{2}\right)\oplus\left\{\frac{1}{2}\bigoplus_{i=1}^{m}
\begin{pmatrix}1&x_i^{-1}\\ x_i& 1\end{pmatrix}\right\}\cdot V^*
$$
and
$$
F=\frac{-S+I}{2}=V \cdot  \left(\frac{(-1+\eps) I_k}{2}\right)\oplus\left\{\frac{1}{2}\bigoplus_{i=1}^{m}
\begin{pmatrix}1&-x_i^{-1}\\ -x_i& 1\end{pmatrix}\right\}\cdot V^*.
$$
One of the matrix $(-1+\eps) I_k$ or $(1+\eps) I_k$ vanishes. Thus the principal angles between $\trE$ and $\trF$ are  the principal angles between the ranges of the two projections in $\bM_{2m}$, 
$$
E_0:=\frac{1}{2}\bigoplus_{i=1}^{m}
\begin{pmatrix}1&x_i^{-1}\\ x_i& 1\end{pmatrix}, \qquad
F_0:=\frac{1}{2}\bigoplus_{i=1}^{m}
\begin{pmatrix}1&-x_i^{-1}\\ -x_i& 1\end{pmatrix},
$$
These two projections have the same ranges, respectively, as the two rank $m$ orthogonal projections,
\begin{equation}\label{jd3}
E_1=\bigoplus_{i=1}^{m}
\frac{1}{1+x_i^2}\begin{pmatrix}1&x_i\\ x_i& x_i^2\end{pmatrix},\qquad
F_1=\bigoplus_{i=1}^{m}
\frac{1}{1+x_i^2}\begin{pmatrix}1&-x_i\\ -x_i& x_i^2\end{pmatrix}.
\end{equation}
We observe that $E_1F_1$ has $m$ nonzero singular values $\mu_i$ given by  
$$
\mu_i^2={\mathrm{Tr}\,}\frac{1}{(1+x_i^2)^2}\begin{pmatrix}1&x_i\\ x_i& x_i^2\end{pmatrix}
\begin{pmatrix}1&-x_i\\ -x_i& x_i^2\end{pmatrix}=\frac{(1-x_i^2)^2}{(1+x_i^2)^2}.
$$
Hence
$$
\mu_i=\frac{1-x_i^2}{1+x_i^2}, \quad i=1,\ldots, m.
$$
These numbers are the cosine of the  principal angles between the eigenspaces of $S$, hence
$$
\alpha_i^{\uparrow} = \arccos \frac{1-x_i^2}{1+x_i^2}=\arcsin \frac{2x_i}{1+x_i^2} \quad i=1,\ldots, m,
$$
establishing the first  formulae of our theorem.

Now, for $\alpha\in[0,\pi/2]$, the relation
$$
\arccos\frac{1-x^2}{1+x^2}=\alpha
$$
entails
$$
(1+x^2)\cos \alpha =1-x^2
$$
so
$$
x^2(1+\cos \alpha)=1-\cos \alpha.
$$
Therefore
$$
x^2=\frac{1-\cos\alpha}{1+\cos \alpha}=\frac{2\sin^2(\alpha/2)}{2\cos^2(\alpha/2)}.
$$
Consequently, for $0<x\le 1$, we have $x=\tan(\alpha/2)$ and
$\alpha=2\arctan x$,
establishing the second formula.

Now we check the last formula. With $a=x^{-1}-x$ and $x=\tan(\alpha/2)$ we obtain
$$
a=\frac{\cos(\alpha/2)}{\sin(\alpha/2)} -\frac{\sin(\alpha/2)}{\cos(\alpha/2)} =
\frac{\cos^2(\alpha/2)-\sin^2(\alpha/2)}{\sin(\alpha/2)\cos(\alpha/2)}=2\cotan\alpha
$$
and  the third formula follows.
\end{proof}

\vskip 5pt
\begin{remark} We note that our result for involutions  also provides the notion of principal angles between two subspaces $\trE$ and $\trF$. Indeed, without loss of generality we may assume that these subspaces are complementary.
Considering an involution with eigenspaces $\trE$ and $\trF$ and applying the theorem, we then reach the joint block diagonalization \eqref{jd3} for the orthogonal projections on $\trE$ and $\trF$. Hence, in the general case, we obtain \eqref{jd1}-\eqref{jd2}, that is, the existence of principal angles, without Proposition \ref{prop1}. 
\end{remark}

\vskip 5pt
Statements (a) and (b) of the following corollary were first noted by Ikramov \cite{Ik}. Statement (c) involves the Friedrichs angle, which is the smallest nonzero principal angle between two subspaces. Statement (d) follows from the elliptical range theorem, see \cite[Lemma 1.3.3, p.\ 20]{HJ} or \cite{Li}.

\vskip 5pt
\begin{cor}\label{cornum} Let $S$ be an involution with polar decomposition $S=U|S|$ and operator norm $s:=\| S\|_{\infty}$. Then :
\begin{itemize}
\item[(a)] $U$ is a Hermitian symmetry;
\item[(b)] $|S^*|=|S|^{-1}$;
\item[(c)] the sine of the Friedrichs angle between the eigenspaces of $S$ is $2/(s+s^{-1})$; 
\item[(d)] the numerical range $W(S)$ of $S$ is the elliptical disc with foci $\pm 1$, length of minor axis $s-s^{-1}$ and length of major axis $s+s^{-1}$. The minor axis lies on the imaginary axis and the major axis on the real axis.
\end{itemize}
\end{cor}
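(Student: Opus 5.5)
The plan is to reduce everything to the $2\times 2$ blocks given by Theorem \ref{th-inv}. All four statements are invariant under unitary congruence: the polar factors transform covariantly, the principal angles are unitarily invariant, and so is the numerical range. So we may assume
$$S=(\eps I_k)\oplus\bigoplus_{i=1}^m B_i,\qquad B_i=\begin{pmatrix}0&x_i^{-1}\\ x_i&0\end{pmatrix},\quad 0<x_1\le\cdots\le x_m\le 1.$$
The polar decomposition respects direct sums, so each item can be checked block by block.

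\textbf{Items (a) and (b).} Write $B_i=\begin{pmatrix}0&1\\1&0\end{pmatrix}\begin{pmatrix}x_i&0\\0&x_i^{-1}\end{pmatrix}$. This is exactly the polar decomposition: the first factor is a Hermitian unitary (a Hermitian symmetry), and the second is positive and invertible. Assembling the blocks gives $U=(\eps I_k)\oplus\bigoplus_i\begin{pmatrix}0&1\\1&0\end{pmatrix}$, which is Hermitian with $U^2=I$; this proves (a). For (b), use $|S^*|=U|S|U^*$. Conjugating $\mathrm{diag}(x_i,x_i^{-1})$ by the swap matrix gives $\mathrm{diag}(x_i^{-1},x_i)=\mathrm{diag}(x_i,x_i^{-1})^{-1}$, and the $\eps I_k$ block is trivially fine.

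\textbf{Item (c).} The singular values of $S$ are $1$ (with multiplicity $k$) and $x_i^{\pm1}$. Hence $s=x_1^{-1}$ when $m\ge1$; if $m=0$ there is no Friedrichs angle, and that degenerate case should be noted. The nonzero principal angles are the $\alpha_i^{\uparrow}$ with $x_i<1$, since $x_i=1$ gives $\alpha_i=\pi/2$. By Theorem \ref{th-inv}, all principal angles between $\trE$ and $\trF$ are $\alpha_i^{\uparrow}=2\arctan x_i>0$, so none vanish. In fact $\trE\cap\trF=0$, so every angle is nonzero and the Friedrichs angle is the smallest one, $\alpha_1^{\uparrow}$. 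The theorem then gives $\sin\alpha_1^{\uparrow}=2/(x_1+x_1^{-1})=2/(s+s^{-1})$. The one point needing care is the convention that the Friedrichs angle is the smallest angle after discarding zero angles, and that here there are none.

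\textbf{Item (d).} I expect this to be the main obstacle. The numerical range of a direct sum is the convex hull of the numerical ranges of the summands. By the elliptical range theorem, $W(B_i)$ is the elliptical disc with foci at the eigenvalues $\pm1$ and minor axis length $\bigl(\mathrm{Tr}\,B_i^*B_i-|\lambda_1|^2-|\lambda_2|^2\bigr)^{1/2}=(x_i^{-2}+x_i^2-2)^{1/2}=x_i^{-1}-x_i$. With foci $\pm1$, the major axis length is then $\sqrt{(x_i^{-1}-x_i)^2+4}=x_i^{-1}+x_i$. These ellipses are confocal and increase as $x_i$ decreases, so they are nested; the largest is the one with $x_1=s^{-1}$.

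It remains to check that the block $\eps I_k$ adds nothing. Its numerical range is the point $\eps=\pm1$, a focus, which lies inside every one of these ellipses (including the degenerate segment $[-1,1]$ when $x=1$). Hence $W(S)=W(B_1)$, the elliptical disc with foci $\pm1$ and axes $s+s^{-1}$ and $s-s^{-1}$. The major axis lies along the line through the foci, which is the real axis, and the minor axis lies on the imaginary axis. The case $m=0$ is again trivial.
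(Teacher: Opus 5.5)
Your proof is correct. For (c) and (d) it is the argument the paper has in mind but leaves implicit: Theorem~\ref{th-inv} gives $s=x_1^{-1}$, and (d) comes from the elliptical range theorem applied to the $2\times 2$ blocks. You usefully supply the details the paper omits:
\begin{itemize}
\item $\trE\cap\trF=\{0\}$, so no principal angle vanishes;
\item the confocal discs $W(B_i)$ are nested, with $W(B_1)$ the largest;
\item the point $W(\eps I_k)=\{\eps\}$ is a focus and is absorbed in the convex hull.
\end{itemize}

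For (a) and (b) the paper takes a different, coordinate-free route. From $S=U|S|=|S^*|U$ and $S=S^{-1}$ it gets $U|S|=U^*|S^*|^{-1}$. Uniqueness of the polar decomposition of an invertible matrix then gives $U=U^*$ and $|S|=|S^*|^{-1}$ at once. That one-line argument needs no structure theorem. Your block computation depends on Theorem~\ref{th-inv} but yields more. It identifies $U\simeq(\eps I_k)\oplus\bigoplus^m\begin{pmatrix}0&1\\1&0\end{pmatrix}$, so $U$ has the same trace as $S$, and it exhibits the eigenvalues $1$ and $x_i^{\pm1}$ of $|S|$. Be aware that several of your steps rest on that same uniqueness fact for invertible matrices, so it is worth stating explicitly:
\begin{itemize}
\item calling your factorization of $B_i$ \emph{the} polar decomposition;
\item assembling $U$ blockwise;
\item transporting the polar factors under unitary congruence.
\end{itemize}

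There are two small slips. In (c), your sentence asserting that the nonzero principal angles are those with $x_i<1$ ``since $x_i=1$ gives $\alpha_i=\pi/2$'' is wrong, because $\pi/2\neq 0$. It also contradicts what you correctly say next, so simply delete it. In (d), the case $m=0$ is not trivial but exceptional. For $S=\pm I$ one has $s=1$ and $W(S)=\{\pm1\}$, a single point, whereas the statement would predict the degenerate disc $[-1,1]$; there is also no angle at all in (c). So (c) and (d) should be read for $S\neq\pm I$, and you should say so rather than call that case trivial.
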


\vskip 5pt
Let us point here a very simple proof of (a) and (b). From $S=U|S|=|S^*|U$ and $S=S^{-1}$ we infer $U|S|=U^*|S^*|^{-1}$ and the result follows as the polar decomposition of an invertible matrix is unique.

The standard terminology for a reflection $R$ in $\bM_n$ is a unitary matrix $R$ such that
$\rk (I-R)=1$ and $\det R=1$. So, if one deletes the unitary assumption, one says that $R$ is a general reflection, or a skew reflection.  A skew reflection has only one principal angle between its two eigenspaces.

\vskip 5pt
\begin{cor} If $S\in\bM_n$ is a skew reflection, then the angle between its two eigenspaces is
$$
\alpha=\arccotan\frac{\sqrt{-n+{\mathrm{Tr\,}}S^*S}}{2}.
$$
\end{cor}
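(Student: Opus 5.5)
We apply Theorem \ref{th-inv} to the skew reflection $S$. Since $\rk(I-R)=1$, the eigenspace of $S$ for $-1$ is one-dimensional, so ${\mathrm{Tr\,}}S=n-2$. Hence $k=n-2$, $\eps=1$ and $m=(n-k)/2=1$. The theorem then gives
$$
S\simeq I_{n-2}\oplus\begin{pmatrix}1&a\\ 0&-1\end{pmatrix}, \qquad a\ge 0,
$$
and the unique principal angle between the eigenspaces is $\alpha=\arccotan(a/2)$. (The case $n=1$, or $S$ with trivial $-1$ eigenspace, is excluded by the rank-one condition.)

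It remains to express $a$ through $S$. The quantity ${\mathrm{Tr\,}}S^*S$ is the squared Hilbert--Schmidt norm, which is invariant under unitary congruence $S\mapsto USU^*$. Computing it on the block form gives
$$
{\mathrm{Tr\,}}S^*S=(n-2)+\left(1+a^2+1\right)=n+a^2 .
$$
Therefore $a=\sqrt{-n+{\mathrm{Tr\,}}S^*S}$, which is well defined since $a\ge0$. Substituting into $\alpha=\arccotan(a/2)$ gives the stated formula.

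The only point requiring care is the trace count: we must check that the rank-one condition forces exactly one $-1$ eigenvalue, so that $k=n-2$ and $m=1$. This holds because $I-S=2F$ and $\rk F=\dim\trF$. The determinant condition $\det S=-1$ is then automatic and plays no role. The remaining steps are the direct computation above.
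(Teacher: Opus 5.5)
Your proof is correct and is essentially the paper's own argument: Theorem \ref{th-inv} with $k=n-2$, $m=1$ gives $S\simeq I_{n-2}\oplus\begin{pmatrix}1&a\\ 0&-1\end{pmatrix}$ with $\alpha=\arccotan(a/2)$, and unitary invariance of the Hilbert--Schmidt norm gives ${\mathrm{Tr\,}}S^*S=n+a^2$. Two side remarks are slightly off. First, $n=1$ is not excluded by the rank condition: $S=-1\in\bM_1$ qualifies, but then $m=0$ and there is no angle to speak of. Second, the determinant is not idle when a skew reflection is defined only by $\rk(I-S)=1$ and $\det S=-1$: writing $S=I+uv^*$, it is $\det S=1+v^*u=-1$ that gives $S^2=I+(2+v^*u)uv^*=I$, and this is what licenses the use of Theorem \ref{th-inv}.
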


\vskip 5pt
\begin{proof} 
By Theorem \ref{th-inv}
 
$$S\simeq I_{n-2}\oplus
\begin{pmatrix}1&a\\ 0& -1\end{pmatrix}
$$
where $\alpha=\arccotan(a/2)$. Hence, denoting by $\|\cdot\|_2$ the Hilbert-Schmidt norm,
$$
\| S\|_2^2= n +a^2
$$
leading to the conclusion of the corollary.
\end{proof}

\vskip 5pt
\begin{cor} Let $E\in\bM_n$ be  a projection. Then  the principal angles between its range and its kernel satisfy 
$$
-{\mathrm{Tr\,}} E +  {\mathrm{Tr\,}} E^*E =\sum_{i=1}^m \cotan^2\alpha_i^{\uparrow}.
$$
\end{cor}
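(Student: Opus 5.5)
We reduce to Theorem \ref{th-inv} by passing to the involution $S=2E-I$. Its eigenspace for $1$ is the range of $E$ and its eigenspace for $-1$ is the kernel of $E$. So the principal angles between range and kernel of $E$ are exactly the angles $\alpha_i^{\uparrow}$, $i=1,\ldots,m$, furnished by the theorem for $S$.

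By Theorem \ref{th-inv}, with $\mathrm{Tr\,}S=\eps k$, we have
$$
S\simeq(\eps I_k)\oplus\left\{\bigoplus_{i=1}^{m}\begin{pmatrix}1&a_i\\ 0&-1\end{pmatrix}\right\},\qquad \alpha_i^{\uparrow}=\arccotan(a_i/2),
$$
that is, $a_i=2\cotan\alpha_i^{\uparrow}$. Hence $E=(I+S)/2$ is unitarily similar to
$$
\left(\frac{1+\eps}{2}I_k\right)\oplus\left\{\bigoplus_{i=1}^{m}\begin{pmatrix}1&a_i/2\\ 0&0\end{pmatrix}\right\}.
$$
Both sides of the claimed identity are invariant under unitary similarity, so we may compute on this block form.

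On each block, the trace of $E$ is $\frac{1+\eps}{2}k+m$. The Hilbert-Schmidt norm squared $\mathrm{Tr\,}E^*E$ is $\frac{1+\eps}{2}k+\sum_i\bigl(1+a_i^2/4\bigr)$. Subtracting gives $\sum_i a_i^2/4=\sum_i\cotan^2\alpha_i^{\uparrow}$, which is the claim.

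The only point requiring care is the identification of the angles and of $m$. The theorem is stated with the normalization $\mathrm{Tr\,}S\ge0$ up to replacing $S$ by $-S$. When $\eps=-1$ one has $(1+\eps)/2=0$, and the roles of range and kernel of $E$ are swapped. Principal angles are symmetric in the pair of subspaces (Proposition \ref{prop1}), so the same $\alpha_i^{\uparrow}$ serve in both cases and nothing changes.

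The trivial cases $E=0$ and $E=I$ give $m=0$ and both sides vanish.
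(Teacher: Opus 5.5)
Your proof is correct and is essentially the paper's argument: both take the upper-triangular form of Theorem \ref{th-inv} with $a_i=2\cotan\alpha_i^{\uparrow}$ and compute a Hilbert--Schmidt norm. The only difference is cosmetic. The paper computes ${\mathrm{Tr\,}}S^*S=n+4\sum_i\cotan^2\alpha_i^{\uparrow}$ and then expands $S=2E-I$, whereas you transfer the block form to $E=(I+S)/2$ and read off ${\mathrm{Tr\,}}E$ and ${\mathrm{Tr\,}}E^*E$ block by block, which avoids expanding $S^*S$.
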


\vskip 5pt
\begin{proof} We consider the complementary projection $F=I-E$ and the involution $S=E-F=2E-I$. From Theorem \ref{th-inv}, taking the Hilbert-Scmidt norm for the second decomposition of $S$, we get
$$
{\mathrm{Tr\,}} S^*S= n+ 4\sum_{i=1}^m \cotan^2\alpha_i^{\uparrow}.
$$
Since
$$
{\mathrm{Tr\,}} S^*S={\mathrm{Tr\,}} (I- 2E+2E^* -4E^*E)=n -4{\mathrm{Tr\,}}E+4{\mathrm{Tr\,}}E^*E,
$$
the result follows. \end{proof}

\section{Numerical range and dilation}

\vskip 5pt
\begin{cor}\label{cordil}  Let $A\in\bM_n$ be a contraction. Then, there exists an involution $S\in\bM_{4n}$  with  $\|S\|_{\infty}=1+\sqrt{2}$ such that
$$
S=\begin{pmatrix} A&X \\ Y&Z\end{pmatrix}
$$
for some matrix $Z\in\bM_{3n}$ and some rectangular matrices $X,Y$ of suitable sizes. The norm $1+\sqrt{2}$ is the smallest possible one; a contraction  with $i$ in its spectrum cannot be dilated into an involution of norm $<1+\sqrt{2}$.
\end{cor}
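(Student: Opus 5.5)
The plan is to obtain the dilation in two steps, first from $A$ to a unitary and then from the unitary to an involution, and to derive the optimality of $1+\sqrt{2}$ from the numerical range description in Corollary \ref{cornum}(d).

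\emph{Step 1 (unitary dilation).} Take Halmos' unitary dilation
$$
W=\begin{pmatrix} A & (I-AA^*)^{1/2}\\ (I-A^*A)^{1/2} & -A^*\end{pmatrix}\in\bM_{2n}.
$$
It is unitary and has $A$ as its upper left corner.

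\emph{Step 2 (involutive dilation of a unitary).} Since $W$ is normal, write $W=V\,\mathrm{diag}(\lambda_1,\ldots,\lambda_{2n})\,V^*$ with $|\lambda_j|=1$. For each $j$, consider the $2\times 2$ matrix
$$
T_j=\begin{pmatrix}\lambda_j & b_j\\ c_j & -\lambda_j\end{pmatrix},\qquad b_jc_j=1-\lambda_j^2 .
$$
This is an involution: its trace is $0$ and its determinant is $-1$. Set $B=V\,\mathrm{diag}(b_j)\,V^*$ and $C=V\,\mathrm{diag}(c_j)\,V^*$; both commute with $W$, and $W^2+BC=I$. Hence
$$
S=\begin{pmatrix} W & B\\ C & -W\end{pmatrix}\in\bM_{4n}
$$
is an involution, and it is unitarily equivalent (via $V\oplus V$ and a permutation) to $\bigoplus_j T_j$. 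Its upper left $n\times n$ corner is $A$.

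\emph{Norm of $S$.} For a $2\times 2$ involution the singular values are $s$ and $s^{-1}$, because $|\det|=1$. So the Hilbert--Schmidt norm gives
$$
s^2+s^{-2}=2|\lambda|^2+|b|^2+|c|^2 .
$$
With the balanced choice $|b|=|c|=|1-\lambda^2|^{1/2}$ this equals $2+2|1-\lambda^2|\le 6$. The inequality $s^2+s^{-2}\le 6$ forces $s^2\le 3+2\sqrt2$, that is, $s\le 1+\sqrt2$. Thus $\|S\|_\infty\le 1+\sqrt2$.

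To obtain equality exactly, unbalance one block: take $b_1=t$ and $c_1=(1-\lambda_1^2)/t$. The norm of $T_1$ depends continuously on $t$ and tends to infinity as $t\to\infty$. By the intermediate value theorem we can therefore choose $t$ so that $\|T_1\|_\infty=1+\sqrt2$, while the other blocks stay $\le 1+\sqrt2$. (If $\lambda_1=\pm1$, so that $1-\lambda_1^2=0$, take $c_1=0$ and $b_1$ with $\|T_1\|=1+\sqrt2$ instead.) The main care point is this bookkeeping, namely the $2\times2$ singular-value identity and the adjustment that makes the norm equal rather than merely at most $1+\sqrt2$.

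\emph{Optimality.} Suppose $A$ is a compression of an involution $S$ of norm $s$. Then $W(A)\subset W(S)$, and if $i\in\sigma(A)$ then $i\in W(A)$. By Corollary \ref{cornum}(d), $W(S)$ is the elliptical disc centred at $0$ with vertical semi-axis $(s-s^{-1})/2$. The point $i$ lies on the imaginary axis, so $i\in W(S)$ forces $(s-s^{-1})/2\ge 1$. This gives $s^2-2s-1\ge 0$, that is, $s\ge 1+\sqrt2$. Hence the norm $1+\sqrt2$ cannot be improved; for instance, $A=iI_n$ cannot be dilated into an involution of norm $<1+\sqrt2$.
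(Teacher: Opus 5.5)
Your proof is correct. It shares the Halmos step and the numerical-range optimality argument with the paper, but your Step 2 takes a genuinely different route.

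The paper uses one fixed involution $T=\begin{pmatrix}0&1+\sqrt2\\ \sqrt2-1&0\end{pmatrix}$. It gets from Corollary \ref{cornum}(d) that $W(T)$ contains the unit disc. It then applies the classical compression trick: choose unit vectors $u_k\in\bC^2$ with $\langle u_k,Tu_k\rangle=e^{i\theta_k}$, so that the diagonalized unitary becomes a compression of $\oplus^{2n}T$. Since every block equals $T$, the norm is exactly $1+\sqrt2$ with no further work, and existence and optimality both come from the same numerical-range picture.

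You instead dilate $W$ itself by functional calculus: $S=\begin{pmatrix}W&B\\ C&-W\end{pmatrix}$, with $B,C$ commuting with $W$ and $W^2+BC=I$. This is a relative of the paper's $T_A$ in Proposition \ref{propdil}, applied to the unitary $W$ and rebalanced. You then control the norm through the identity $s^2+s^{-2}=\|T_j\|_2^2$ for $2\times2$ involutions. What this buys you:
\begin{itemize}
\item an explicit formula, with Corollary \ref{cornum}(d) needed only for optimality;
\item a sharper, spectrum-dependent bound: in the balanced version $\|S\|_\infty<1+\sqrt2$ unless $\pm i\in\sigma(W)$. This is exactly why you need the extra intermediate-value adjustment to reach equality.
\end{itemize}

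A further bonus of your route concerns real matrices. If $A$ is real, then $W$ is real orthogonal. Take $B=tI$ and $C=t^{-1}(I-W^2)$; this gives a real involution whose blocks satisfy $s_j^2+s_j^{-2}=2+t^2+t^{-2}|1-\lambda_j^2|^2$, which is $\le 6$ at $t=\sqrt2$. Increasing $t$ until the maximum over $j$ reaches $6$ produces a real $S$ with $\|S\|_\infty=1+\sqrt2$. This appears to answer affirmatively the Question the paper raises right after this corollary, where the complex vectors $u_k$ seemed unavoidable.
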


\vskip 5pt
\begin{proof} We may dilate with Halmos $A$ into a unitary matrix in $\bM_{2n}$,
$$
\begin{pmatrix} A&-\sqrt{I-AA^*} \\
\sqrt{I-A^*A}&A^*
\end{pmatrix}
\simeq D:={\mathrm{diag}}(e^{i\theta_1},\ldots e^{i\theta_{2n}}).
$$
The involution 
$$
T=\begin{pmatrix} 0&1+\sqrt{2}\\
-1+\sqrt{2}&0
\end{pmatrix}
$$
has a numerical range $W(T)$ containing the unit disc, by (d) of Corollary \ref{cornum}. Hence the diagonal matrix  $D$ can be dilated into the involution $\oplus^{2n}T\in\bM_{4n}$.  For convenience of the reader we recall the proof of this classical fact. Pick unit vectors $u_1,\ldots, u_{2n}$ in $\bC^2$ such that
$$
\langle u_k, Tu_k\rangle =e^{i\theta_k}, \qquad k=1,\ldots, 2n,
$$
and define
\begin{align*}
v_1&=(u_1,0,\ldots, 0)\in (\bC^2)^{2n} =\bC^{4n} \\
&\ldots \\
v_{2n}&=(0,0,\ldots, u_{2n})\in\bC^{4n}.
\end{align*}
Then with respect to a basis of $\bC^{4n}$ starting with $v_1,v_2,\ldots, v_{2n}$, we see that $\oplus^{2n} T$ has a representation of the form
$$
\begin{pmatrix} D&K \\
L&M
\end{pmatrix}
$$
for some matrices $K,L,M\in\bM_{2n}$.

Therefore $A$ can be dilated into an involution $S\simeq\oplus^{2n}T$ and $\|S\|_{\infty}=1+\sqrt{2}$. This constant cannot be diminished as $i$ would not be any longer  in $W(S)$,  and since $W(S)\supset W(A)$, a contraction $A$ with $i$ in its spectrum
could not be dilated into $S$. \end{proof}

\vskip 5pt
In this proof, it seems necessary to use a complex matrix $S$ even if we start with a real matrix $A$. Hence, we ask the following.

\vskip 5pt
\begin{question} In the previous proposition, is it possible to take $S$ with only real entries when so is $A$ ?
\end{question}

\vskip 5pt
\begin{prop}\label{propdil} Every matrix $A\in\bM_n$ can be dilated into two involutions in $\bM_{2n}$, 
$$
S_A=
\begin{pmatrix} A& I+A\\
I-A&-A
\end{pmatrix}, \qquad 
T_A=
\begin{pmatrix} A& I-A^2\\
I&-A
\end{pmatrix}.
$$
If $A$ is a contraction, then $\|S_A\|_{\infty}\le 3$ and  $\|S_A\|_{2}\le \sqrt{6n}$.
\end{prop}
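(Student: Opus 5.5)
The statement has three parts: $S_A$ and $T_A$ are involutions, $\|S_A\|_\infty\le 3$ for a contraction $A$, and $\|S_A\|_2\le\sqrt{6n}$. Each part is a direct computation, and I would do them in that order.

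\emph{Involutions.} Square each block matrix, using only that $A$ commutes with itself and with $I$. For $S_A$ the blocks of $S_A^2$ are
\begin{align*}
(1,1)&:\ A^2+(I+A)(I-A)=A^2+I-A^2=I,\\
(1,2)&:\ A(I+A)-(I+A)A=0,\\
(2,1)&:\ (I-A)A-A(I-A)=0,\\
(2,2)&:\ (I-A)(I+A)+A^2=I.
\end{align*}
For $T_A$ they are
\begin{align*}
(1,1)&:\ A^2+(I-A^2)=I,\\
(1,2)&:\ A(I-A^2)-(I-A^2)A=0,\\
(2,1)&:\ A-A=0,\\
(2,2)&:\ (I-A^2)+A^2=I.
\end{align*}
Both matrices visibly have $A$ as their upper left corner, so each is a dilation of $A$.

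\emph{Operator norm.} I would split $S_A$ into a part carrying $A$ and a constant part:
$$
S_A=\begin{pmatrix} A&A\\ -A&-A\end{pmatrix}+\begin{pmatrix}0&I\\ I&0\end{pmatrix}.
$$
The second summand is unitary, so it has norm $1$. The first summand factors as
$$
\begin{pmatrix}1&1\\-1&-1\end{pmatrix}\otimes A=\begin{pmatrix}1\\-1\end{pmatrix}\begin{pmatrix}1&1\end{pmatrix}\otimes A,
$$
so its norm is $2\|A\|_\infty\le 2$. The triangle inequality then gives $\|S_A\|_\infty\le 3$.

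\emph{Hilbert--Schmidt norm.} Add the squared Hilbert--Schmidt norms of the four blocks:
$$
\|S_A\|_2^2=2\|A\|_2^2+\|I+A\|_2^2+\|I-A\|_2^2 .
$$
By the parallelogram identity, $\|I+A\|_2^2+\|I-A\|_2^2=2\|I\|_2^2+2\|A\|_2^2=2n+2\|A\|_2^2$. Hence
$$
\|S_A\|_2^2=2n+4\|A\|_2^2\le 2n+4n=6n,
$$
where the last step uses $\|A\|_2^2\le n\|A\|_\infty^2\le n$. Taking square roots gives $\|S_A\|_2\le\sqrt{6n}$.

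None of these steps is a real obstacle. The only point needing care is the operator norm, where one must find a splitting that gives the constant $3$. The decomposition above, a rank-one pattern tensored with $A$ plus a permutation unitary, does this cleanly.
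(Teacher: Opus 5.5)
Your proof is correct and follows the paper's argument essentially verbatim: direct squaring for the involution property, the same splitting $S_A=\begin{pmatrix}0&I\\ I&0\end{pmatrix}+\begin{pmatrix}A&A\\ -A&-A\end{pmatrix}$ for the operator-norm bound, and the same blockwise Hilbert--Schmidt computation giving $\|S_A\|_2^2=2n+4\|A\|_2^2\le 6n$. Your tensor factorization $\begin{pmatrix}1&1\\ -1&-1\end{pmatrix}\otimes A$ only makes explicit the bound $2\|A\|_\infty$, which the paper states without justification.
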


\vskip 5pt
\begin{proof}
Direct computation reveals that $S_A$ and $T_A$ are involutions. From
$$
S_A=\begin{pmatrix} 0&I \\I&0\end{pmatrix} + \begin{pmatrix} A&A \\ -A&-A\end{pmatrix} 
$$
we infer $\| S_A\|_{\infty} \le 1 +2\| A\|_{\infty}$, and so $\| S_A\|_{\infty}\le 3$ for any contraction $A$.
The Hilbert-Schmidt norm of $S_A$ satisfies
$$
\| S_A\|_2^2={\mathrm{Tr\,}} |I+A|^2 +{\mathrm{Tr\,}} |I-A|^2 + 2{\mathrm{Tr\,}} |A|^2.
$$
So 
$$
\| S_A\|_2^2= 4{\mathrm{Tr\,}} |A|^2 + 2{\mathrm{Tr\,}} I
$$
and, if $A$ is a contraction, this is maximized with $6n$.
\end{proof}

\vskip 5pt
\begin{question} The constant $\sqrt{6n}$ in Proposition \ref{propdil} is the best possible one. However, it seems possible to slighly diminish the constant $3$. What is the sharp value $\omega$ ? Is then $\omega$ the smallest possible operator norm  ensuring that any contraction in $\bM_n$ can be dilated into some matrix in $\bM_{2n}$
with norm less or equal than $\omega$ ?
\end{question}

\vskip 5pt
We may use in Proposition \ref{propdil}  the partial transpose 
$$
S_A^{\tau} =
\begin{pmatrix} A& I-A\\
I+A&-A
\end{pmatrix}, 
$$
and similarly for $T_A^{\tau}$. Further, if $A$ is invertible then we have the family parametrized by $x\in\bR$,
$$
S_A(x)= \begin{pmatrix} A& I+A^{x}\\
I-A^{2-x}&-A
\end{pmatrix}
$$
and the partial transposes $S_A^{\tau}(x)$.

\vskip 15pt
\noindent
Jean-Christophe Bourin

\noindent
Université Marie et Louis Pasteur, CNRS, LmB (UMR 6623), F-25000 Besançon, France.

\noindent

\noindent

\noindent
Email: jcbourin@univ-fcomte.fr

  \vskip 15pt\noindent
Eun-Young Lee

\noindent
 Department of mathematics, KNU-Center for Nonlinear
Dynamics,

\noindent
Kyungpook National University,

\noindent
 Daegu 702-701, Korea.

\noindent
 Email: eylee89@knu.ac.kr

\end{document}